\theoremstyle{plain}
\newtheorem{theorem}{Theorem}[section]
\newtheorem{proposition}[theorem]{Proposition}
\theoremstyle{definition}
\newtheorem{definition}[theorem]{Definition}
\newtheorem{example}[theorem]{Example}
\theoremstyle{remark}
\newenvironment{ack}{\noindent{\bf Acknowledgements}.}{}
\def\ch{\mbox{char}}
\def\EE{{\mathbb E}}
\def\VV{V(\underline{\bf X}, \Gamma)}
\def\NN{{\mathbb N}}
\def\ZZ{{\mathbb Z}}
\def\L{{\mathcal L}}
\title{Tropicalization and irreducibility of Generalized Vandermonde
Determinants}
\author{Carlos D'Andrea}
\address{Universitat de Barcelona, Departament d'{\`A}lgebra i Geometria.
Gran Via 585, E-08007 Barcelona, Spain.}
\email{cdandrea@ub.edu}
\urladdr{http://carlos.dandrea.name/}
\thanks{C. D'Andrea is supported
by the Programa Ram{\'o}n y Cajal, and the Research Project MTM2007--67493, both
from the
Ministerio de Educaci{\'o}n y Ciencia, Spain.}
\author{Luis Felipe Tabera}
\address{Universidad de Cantabria, Departamento de Matem\'aticas, Estad\'\i{}stica y Computaci\'on. Av los Castros s/n, E-39005, Santander, Spain}
\urladdr{http://personales.unican.es/taberalf/}
\thanks{L.F. Tabera is supported by the Research Project MTM2005-08690-CO2-02}
\subjclass[2000]{Primary 12E05; Secondary 14M12.}
\keywords{Vandermonde determinants, Multivariate factorization, Tropical
Geometry, valuations.}
\begin{document}
\begin{abstract}
We find geometric and arithmetic conditions in order to characterize
the irreducibility of the determinant of the generic Vandermonde
matrix over the algebraic closure of any field $k$. We also
characterize those determinants whose tropicalization with respect
to the variables of a row is irreducible.
\end{abstract}

\maketitle

\section{Introduction}
Let $n, \, N$ positive integers, ${\bf X}_1, \ldots,
{\bf X}_N$ $n$-tuples of indeterminates, i.e.
\[{\bf X}_i:=\big(X_{i1}, \ldots, X_{in}\big)\quad i=1, \ldots, N\]
where each $X_{ij}$ is an indeterminate, and
$\Gamma:=\big(\gamma_1,\ldots,\gamma_N\big)$, a $N$-tuple of vectors in $\NN^n$,
$\gamma_j =(\gamma_{j1}, \ldots, \gamma_{jn})$. Set

\[\VV:=
\begin{vmatrix}
X_{11}^{\gamma_{11}} X_{12}^{\gamma_{12}}\cdots X_{1n}^{\gamma_{1n}} &
\ldots\ldots & X_{11}^{\gamma_{N1}} X_{12}^{\gamma_{N2}}\cdots
X_{1n}^{\gamma_{Nn}}\\\mbox{}\\
X_{21}^{\gamma_{11}} X_{22}^{\gamma_{12}}\cdots X_{2n}^{\gamma_{1n}} &
\ldots\ldots & X_{21}^{\gamma_{N1}} X_{22}^{\gamma_{N2}}\cdots
X_{2n}^{\gamma_{Nn}}\\\mbox{}\\
\multicolumn{3}{c}{\ldots\ldots\ldots}\\\mbox{}\\
X_{N1}^{\gamma_{11}} X_{N2}^{\gamma_{12}}\cdots X_{Nn}^{\gamma_{1n}} &
\ldots\ldots & X_{N1}^{\gamma_{N1}} X_{N2}^{\gamma_{N2}}\cdots
X_{Nn}^{\gamma_{Nn}}
\end{vmatrix}
\]
We call the polynomial $\VV\in \ZZ\big[(X_{i, j})_{1 \leq i\leq N, 1\leq j
\leq n}\big]$ the \textit{Generalized Vandermonde determinant} associated to
$\Gamma$.

\begin{example}
If $n=1$ and $\Gamma=(0, 1, \ldots, N-1)$, then \[\VV=\pm\prod_{1\leq i<j\leq
N}(X_{i1}-X_{j1}), \] the classical Vandermonde determinant.
\end{example}

\begin{example}\label{unigen}
It is a classical result (see for instance \cite{mac}) that if
$n=1$, then for any set $\Gamma\subset \NN$ of $N$ elements, the
determinant $\VV$ is a multiple of the classical Vandermonde
determinant $\prod_{1\leq i<j\leq N}(X_{i1}-X_{j1})$.
\end{example}

\begin{example}\label{char}
Suppose $n=2$, $N=3$ and $\Gamma:=((2, 0), (0, 2), (2, 2))$.

By computing the $3\times 3$ determinant we have that
$\VV$ is equal to
\[ X_{11}^2 X_{22}^2 X_{31}^2 X_{32}^2  - X_{11}^2  X_{32}^2  X_{21}^2  X_{22}^2
 - X_{12}^2  X_{21}^2  X_{31}^2  X_{32}^2\]
\[+ X_{12}^2  X_{31}^2  X_{21}^2  X_{22}^2  + X_{11}^2  X_{12}^2  X_{21}^2
X_{32}^2 - X_{11}^2  X_{12}^2  X_{31}^2  X_{22}^2.\]
Set $\Gamma':=\{(1, 0), (0, 1), (1, 1)\}$, it is easy to see that
\begin{itemize}
\item If $\ch(k)\neq2, $ then $\VV$ is absolutely irreducible over $k\big[(X_{i,
j})_{1 \leq i\leq N, 1\leq j
\leq n}\big]$(i.e. irreducible
over $\overline{k}\big[(X_{i, j})_{1 \leq i\leq N, 1\leq j
\leq n}\big]$, $\overline{k}$ being the algebraic closure of $k$).
\item If $\ch(k)=2$, then $\VV={V(\underline{\bf X}, \Gamma')}^2$ in
$k\big[(X_{i, j})_{1 \leq i\leq N, 1\leq j
\leq n}\big]$.
\end{itemize}

\end{example}

\begin{example}\label{firststep}
As an easy exercise, it can be proved that if $\Gamma\subset\NN^n$
is contained in an affine line, then $\VV$ factorizes in a similar
way as in the Example \ref{unigen}.
\end{example}

In the univariate case ($n=1$), the Vandermonde determinant is associated with
the interpolation problem, and it has
been extensively studied (see \cite{GV,gow,elm} and the references therein). The
multivariate interpolation
problem is naturally associated with
Generalized Vandermonde determinants, and there is also an extensive and current
literature
on the topic. See for instance
\cite{CL, GS, LS, olv, zhu}.

The purpose of this article is to study the irreducibility of $\VV$. As Example
\ref{char} suggests, the answer
will depend on the characteristic of $k$. Also, our intuition with the
univariate case may lead us to believe that
Generalized Vandermonde determinants have lots of irreducible factors.
Our main result essentially tell us that in general these polynomials are
absolutely irreducible.

There are some trivial factors that can be already read from the set of
exponents.
Let $\overline{\gamma}:= (g_1\ldots, g_n)$ where each $g_i$ is defined as
$\min\{{\gamma_1}_i, \ldots, {\gamma_N}_i \}$, $i=1,\ldots, n.$
It is easy to check that the following factorization holds:
\[V\big( \underline{\bf X}, (\gamma_1 , \ldots, \gamma_N ) \big)=
\prod_{i=1}^n \bigg( \prod_{j=1}^N X_{ij} \bigg)^{g_i} V \big( \underline{\bf
X}, (\gamma_1 -\overline{\gamma},
\ldots, \gamma_N- \overline{\gamma}) \big),\]
and $V \big( \underline{\bf X}, (\gamma_1 -\overline{\gamma}, \ldots, \gamma_N-
\overline{\gamma}) \big)$
has no monomial factor. Let $d_\Gamma$ be the largest integer such that
$\frac{1}{d_\Gamma} \{\gamma_1 -\overline{\gamma}, \ldots, \gamma_N
-\overline{\gamma}\} \subset \NN^n$, and  $\L_\Gamma\subset \mathbb{R}^n$ the
affine subspace spanned by $\Gamma$.

\begin{theorem}\label{main}
Let $N\geq 3$. The Vandermonde polynomial $\VV$ is irreducible in
$\overline{k}\big[(X_{ij})_{1
\leq i, j, \leq N}\big]$ if and only if the following three conditions apply:
\begin{itemize}
\item $\dim(\L_\Gamma)\geq2$,
\item $\gcd\big({\bf X}^{\gamma_i}\big)_{1\leq i\leq N}=1$, equivalently
$\overline{\gamma}=(0,\ldots,0)$,
\item $\ch(k)$ does not divide $d_\Gamma$.
\end{itemize}
\end{theorem}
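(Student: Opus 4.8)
The plan is to prove the two implications separately; assume throughout that the $\gamma_j$ are pairwise distinct (otherwise $\VV=0$ and there is nothing to characterize).

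\emph{Necessity.} Suppose $\VV$ is irreducible. If $\dim(\L_\Gamma)\le 1$, let $b\in\ZZ^n$ be primitive in the direction of the line containing $\Gamma$ and write $\gamma_j=\alpha+t_jb$ with $t_j\in\ZZ$; then ${\bf X}_i^{\gamma_j}={\bf X}_i^{\alpha}\big({\bf X}_i^{b}\big)^{t_j}$, so $\VV=\big(\prod_{i=1}^N{\bf X}_i^{\alpha}\big)V_1$ with $V_1$ a univariate Generalized Vandermonde determinant in the monomials ${\bf X}_i^{b}$ with exponents $(t_1,\dots,t_N)$; by Example~\ref{unigen} (after a translation of exponents into $\NN$) $V_1$ is divisible by $\prod_{i<j}\big({\bf X}_i^b-{\bf X}_j^b\big)$, which for $N\ge 3$ has at least three non-associate factors, so $\VV$ is reducible. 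If $\overline{\gamma}\neq(0,\dots,0)$, the factorization displayed just before the statement gives a nontrivial monomial factor, its complementary factor being non-constant since $N\ge 2$ and the $\gamma_j$ are distinct. Finally, if $p:=\ch(k)$ divides $d_\Gamma$, write $\gamma_j-\overline{\gamma}=p\,\delta_j$; since the Frobenius fixes the sign of a permutation, $\det\big((a_{ij})^p\big)=\det(a_{ij})^p$ in characteristic $p$, hence $V\big(\underline{\bf X},(\gamma_j-\overline{\gamma})_j\big)=V\big(\underline{\bf X},(\delta_j)_j\big)^{p}$, and multiplying back the monomial factor exhibits $\VV$ as a nontrivial $p$-th power. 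So all three conditions are necessary.

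\emph{Sufficiency.} Assume the three conditions. Since $\VV$ is, up to sign, invariant under the $S_N$-action permuting the blocks ${\bf X}_1,\dots,{\bf X}_N$, I single out the last row and expand along it:
\[
\VV=\sum_{j=1}^{N}(-1)^{N+j}\,{\bf X}_N^{\gamma_j}\,M_j,\qquad
M_j:=V\big(({\bf X}_1,\dots,{\bf X}_{N-1}),\,\Gamma\setminus\{\gamma_j\}\big),
\]
each $M_j$ being a nonzero Generalized Vandermonde determinant on $N-1$ points; so, over $L:=\overline{k}({\bf X}_1,\dots,{\bf X}_{N-1})$, one has $\VV\in L[{\bf X}_N]$ with support $\{\gamma_1,\dots,\gamma_N\}$ and Newton polytope $\operatorname{conv}(\Gamma)$, of dimension $\ge 2$. \emph{Case $d_\Gamma=1$.} Pick a generic weight valuation $w$ on $L$; then $\operatorname{trop}_w(\VV)=\bigoplus_j\big(w(M_j)\odot{\bf x}_N^{\odot\gamma_j}\big)$ is the ``tropicalization with respect to the variables of a row'', and I would use the companion result announced in the abstract: the hypotheses $\dim\operatorname{conv}(\Gamma)\ge 2$, $\overline{\gamma}=0$, $d_\Gamma=1$ force $\operatorname{trop}_w(\VV)$ to be an irreducible tropical polynomial for a suitable $w$. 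By multiplicativity of tropicalization, a factorization $\VV=AB$ in $L[{\bf X}_N]$ yields a tropical factorization, so one factor, say $A$, has a tropical-monomial tropicalization; hence $A$ is a monomial in ${\bf X}_N$ times an element of $L$, and since $\overline{\gamma}=0$ forbids ${\bf X}_N$-monomial factors of $\VV$, in fact $A\in L$. By Gauss' lemma over the UFD $\overline{k}[{\bf X}_1,\dots,{\bf X}_{N-1}]$, every factorization of $\VV$ in $\overline{k}[(X_{ij})]$ therefore has a factor in $\overline{k}[{\bf X}_1,\dots,{\bf X}_{N-1}]$, which divides $\gcd(M_1,\dots,M_N)$; so it remains to prove $\gcd(M_1,\dots,M_N)=1$, which I would do by induction on $N$, using that each $M_j$ (after removing its monomial factor) is irreducible or a perfect power of an irreducible by the inductive hypothesis, and that distinct $M_j,M_{j'}$ are coprime because the exponent sets $\Gamma\setminus\{\gamma_j\}$ and $\Gamma\setminus\{\gamma_{j'}\}$ present distinct edge directions once $\Gamma$ is not contained in a line.

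\emph{Case $d_\Gamma=d>1$.} Write $\gamma_j=d\,\mu_j$ (possible because $\overline{\gamma}=0$), so $\VV=g\big({\bf X}^{[d]}\big)$ where $g:=V\big(\underline{\bf X},(\mu_j)_j\big)$ has $d_{\{\mu_j\}}=1$ and ${\bf X}^{[d]}$ raises each of the $Nn$ variables to the $d$-th power. Since $p\nmid d$, the field $\overline{k}$ contains a primitive $d$-th root of unity, $\overline{k}[{\bf X}]/\overline{k}[{\bf X}^{[d]}]$ is a Kummer extension with group $G=(\mu_d)^{Nn}$ acting by scaling the variables, and the irreducible factors of $g({\bf X}^{[d]})$ form a single $G$-orbit (squarefreeness is preserved as $p\nmid d$). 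By the previous case $g$ is irreducible, and I would deduce that this orbit is a singleton — i.e.\ $\VV$ is irreducible — by bounding the index in $G$ of the decomposition group of a factor, using Minkowski-indecomposability of $\operatorname{Newt}(g)$ (also needed, and supplied, in the primitive case) together with the explicit coefficients of the Vandermonde $g$; the Newton polytope alone does not suffice, since $Y_1+Y_2$ is irreducible with Minkowski-indecomposable Newton polytope yet becomes reducible under ${\bf Y}\mapsto{\bf Y}^{[2]}$. I expect the two genuinely hard points to be (a) the equality $\gcd(M_1,\dots,M_N)=1$, an induction on $N$ that must carefully control the monomial factors the minors acquire and check that enough reduced minors keep dimension $\ge 2$ and content prime to $p$, and (b) this Kummer descent, which cannot be reduced to convex geometry and requires the determinantal structure of $g$.
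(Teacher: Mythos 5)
Your necessity argument is fine (and more explicit than the paper's one-line dismissal). The sufficiency direction, however, has two genuine gaps, both of which you yourself flag as ``hard points'' without resolving. First, the case $d_\Gamma=d>1$ with $\ch(k)\nmid d$ is left as a plan: the Kummer-descent step of ``bounding the index of the decomposition group \dots\ using the determinantal structure of $g$'' is precisely the content that would need to be proved, and no argument is given; your own example $Y_1+Y_2\mapsto Y_1^2+Y_2^2$ shows the desired conclusion is \emph{false} for a general irreducible $g$ with Minkowski-indecomposable Newton polytope, so something specific to the Vandermonde coefficients must be established, and nothing is. The paper avoids this case split entirely: Proposition~\ref{desc} shows directly that the generic polynomial supported on $\Gamma$ is absolutely irreducible whenever $\dim(\L_\Gamma)\geq 2$ and $\ch(k)\nmid d_\Gamma$, by observing that $P=R^{D}$ with $D>1$ forces the resultant of $P$ and $\partial P/\partial Y_1$ to vanish identically, contradicting the algebraic independence of the coefficients; combined with Theorem~\ref{minors} (algebraic independence of the ratios of maximal minors), this yields irreducibility over $\overline{k({\bf X}_2,\ldots,{\bf X}_N)}[{\bf X}_1]$ in one stroke, with no need to invoke the tropical theorem at all.

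Second, your reduction to $\gcd(M_1,\ldots,M_N)=1$ is not backed by a proof, and the induction you sketch does not obviously close: deleting $\gamma_j$ from $\Gamma$ can leave a collinear set, changes the content $\overline{\gamma}$ and the lattice index $d$ of the remaining exponents (possibly making the latter divisible by $\ch(k)$), so the minors $M_j$ need not satisfy the inductive hypotheses; and the coprimality of $M_j$ and $M_{j'}$ ``because the exponent sets present distinct edge directions'' is an assertion, not an argument. The paper disposes of the ``no factor free of ${\bf X}_1$'' step with a short symmetry trick worth comparing with: if $\VV=p({\bf X}_2,\ldots,{\bf X}_N)\,q({\bf X}_1,\ldots,{\bf X}_N)$ with $\deg_{{\bf X}_2}p>0$, then swapping ${\bf X}_1\leftrightarrow{\bf X}_2$ and using irreducibility over $\overline{k({\bf X}_2,\ldots,{\bf X}_N)}[{\bf X}_1]$ forces $q$ to be free of ${\bf X}_2$, and setting ${\bf X}_1={\bf X}_2$ then makes $\VV$ vanish while neither factor does --- a contradiction. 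No gcd of minors is ever needed. (A smaller point: to prescribe the values $w(M_j)$ of your ``generic weight valuation'' you already need the algebraic independence of the $M_j/M_N$, i.e.\ Theorem~\ref{minors}, which your proposal never invokes.)
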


Note that $\dim(\L_\Gamma)\geq2$ implies $N\geq3$ and $n>1$.
When $n=2$ and $N=3, 4$, Theorem \ref{main} can also be obtained from an
application of Ostrowski's work
\cite{ost} on the irreducibility of fewnomials  (see also \cite{BP}). We will
prove the general case by making use of
Bertini's Theorem on the variety defined by $\VV$, and applying some results
concerning algebraic independence of maximal
Vandermonde minors gotten in \cite{tab}.

When dealing with the problem of  factorizing multivariate polynomials, several
approaches like those given in \cite{ost, Gao} focus on the irreducibility of
the Newton polytope of the polynomial with respect to the operation of computing
Minkowski sums, which gives sufficient conditions to show irreducibility. A
refinement of this method can be obtained with the aid of Tropical Geometry:
instead of working with Newton polytopes, we can study regular subdivisions of
them. So, we can cover more general families of polynomials, but at the cost of
losing  track of the characteristic of the ground field $k$.

In our case, the problem can be dealt as follows: by expanding the
determinant of the generalized Vandermonde determinant with respect
to the first row, we get the following expansion:
$\VV=\sum_{i=1}^N(-1)^{\sigma_i}\Delta_i{\bf X}_1^{\gamma_i},$ For
the irreducibility problem, it is enough to consider a dehomogenized
version of $\VV$ as follows $$ \VV_{\bf aff}:={\bf X}_1^{\gamma_N}
+\sum_{i=1}^{N-1} A_i {\bf X}_1^{\gamma_i},$$ where
$A_i:=(-1)^{\sigma_i}\frac{\Delta_i}{\Delta_N},\ i=1,\ldots N-1$.

We can then regard $\VV_{\bf aff}$ as a polynomial in $K[{\bf X}_1]$,  $K$ being
now a field containing all the
$A_i$'s, $1,\ i=1,\ldots, N-1$.

Given any rank one valuation: $v:K\rightarrow \mathbb{R}$, we can extend it to
$K^n$ componentwise
as follows \[\begin{matrix}{\bf v}:& K^n & \rightarrow & \mathbb{R}^n \\&(z_1,
\ldots, z_n) &\mapsto & (v(z_1), \ldots, v(z_n))\end{matrix}.\]
The tropicalization of $\VV$ is then defined as
\[\textrm{Trop} (\VV)=\overline{{\bf v}(\{\VV_{\bf aff}=0\})} \subseteq
\mathbb{R}^n,\]
where the closure in the right hand side is taken with respect to the standard
topology in $\mathbb{R}^n$.

It turns out (see for instance, \cite{Bieri-Groves} or
\cite{einsiedler-2004-}) that
$\textrm{Trop}(\VV)$ is a connected polyhedral complex of
codimension $1$. If $\VV_{\bf aff}$ is reducible over $K[{\bf X}_1]$,
then Trop$(\VV)$ is a reducible tropical hypersurface, i.e. it can be
expressed as the union of two proper tropical hypersurfaces. So, if we
prove that for a special valuation ${\bf v}$, Trop$(\VV)$ is irreducible then
$\VV$ will be irreducible over any field $k$.

\begin{theorem}\label{irred_tropical}
Let $N\geq 3$. Given any field $K\supseteq k(A_1,\ldots,
A_{N-1})$, there exists a valuation $v$ defined over $K$ such that
$\textrm{Trop}(\VV)$ is an irreducible tropical variety if and only if:
\begin{itemize}
\item $\dim(\L_\Gamma)\geq2$,
\item $\gcd\big({\bf X}^{\gamma_i}\big)_{1\leq i\leq N}=1$, equivalently
$\overline{\gamma}=(0,\ldots,0)$.
\item $d_\Gamma=1$.
\end{itemize}
\end{theorem}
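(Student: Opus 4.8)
The plan is to leverage Theorem \ref{main} for the "if'' direction and a separate tropical argument for the "only if'' direction, using the dictionary between reducibility of $\VV_{\bf aff}$ over $K[{\bf X}_1]$ and reducibility of the tropical hypersurface $\textrm{Trop}(\VV)$. For the "if'' direction, assume $\dim(\L_\Gamma)\geq 2$, $\overline{\gamma}=(0,\ldots,0)$, and $d_\Gamma=1$. First I would observe that the third condition $d_\Gamma=1$ is stronger than the condition "$\ch(k)$ does not divide $d_\Gamma$'' appearing in Theorem \ref{main}, so in fact $\VV$ is irreducible over $\overline{k}$ for \emph{every} field $k$, in particular over fields of arbitrary characteristic. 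Then I would pick $K$ and a valuation $v$ on $K$ that is ``generic enough'' — concretely, I expect one wants $v$ trivial on $k$ and the values $v(A_i)$ to be algebraically independent real numbers (or at least in sufficiently general position), so that the initial forms $\mathrm{in}_w(\VV_{\bf aff})$ are as simple as possible (binomials, up to the monomial ordering induced by $w$ on the $\gamma_i$). The key step is to show that with such $v$, any factorization $\textrm{Trop}(\VV)=T_1\cup T_2$ into tropical hypersurfaces would, via the lifting/structure theory of tropical varieties (e.g. \cite{einsiedler-2004-}), force a corresponding factorization of $\VV_{\bf aff}$ over the algebraically closed valued field $\overline{K}$ — contradicting irreducibility. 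This is where one must be careful: tropical irreducibility is \emph{a priori} weaker than algebraic irreducibility, so I cannot simply quote Theorem \ref{main}; instead I would use that a generic choice of $v$ makes $\textrm{Trop}(\VV)$ equal to the full Gröbner-complex-theoretic tropicalization, and that an irreducible polynomial whose tropicalization is reducible must have a nontrivial ``Minkowski-type'' splitting of the relevant regular subdivision of its Newton polytope — which the genericity of $v$ rules out.

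For the "only if'' direction I would prove the contrapositive for each of the three conditions. If $\dim(\L_\Gamma)\leq 1$, then by Examples \ref{unigen} and \ref{firststep} the polynomial $\VV$ factors (over $\ZZ$, hence over any $K$) as a product involving the classical univariate Vandermonde, so $\VV_{\bf aff}$ is reducible over $K[{\bf X}_1]$ regardless of the valuation, and therefore $\textrm{Trop}(\VV)$ is reducible for every $v$. If $\overline{\gamma}\neq(0,\ldots,0)$, the displayed monomial factorization in the introduction shows $\VV=\big(\prod_i\prod_j X_{ij}\big)^{g_i}\cdots$ has a nontrivial monomial factor, which again persists after any tropicalization (a monomial factor tropicalizes to a linear factor), so $\textrm{Trop}(\VV)$ is reducible for all $v$. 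The remaining, and I expect hardest, case is $d_\Gamma\geq 2$: here $\{\gamma_i-\overline{\gamma}\}=d_\Gamma\cdot\{\delta_i\}$ with $\delta_i\in\NN^n$, so $\VV_{\bf aff}$ is (up to the monomial factor handled above) a polynomial in the monomials ${\bf X}_1^{\delta_i}$ raised to the power $d_\Gamma$; that is, $\VV_{\bf aff}={\bf X}_1^{\overline{\gamma}}\,\cdot\, W\big(({\bf X}_1^{\delta_i})^{d_\Gamma}\big)$ for a suitable polynomial $W$. The point is that substituting ${\bf Y}={\bf X}_1^{\delta\cdot d_\Gamma}$ makes the Newton polytope of $\VV_{\bf aff}$ a $d_\Gamma$-fold dilate, and in tropical terms $\textrm{Trop}(\VV)$ is the image under multiplication by $d_\Gamma$ of a smaller tropical variety, hence carries nontrivial weights / is a non-reduced or split structure — I would make this precise by showing that the induced map on supports is not primitive, so every regular subdivision used in forming $\textrm{Trop}(\VV)$ refines through the coarser ``$\delta$-grid,'' which exhibits $\textrm{Trop}(\VV)$ as reducible (as a union, with multiplicity) for any $v$.

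Assembling these, the theorem follows: the three displayed conditions are each necessary (by the three contrapositive arguments above, each valid for \emph{all} $v$), and together sufficient (by exhibiting one good valuation via the genericity argument and quoting irreducibility of $\VV$ from Theorem \ref{main}, whose characteristic hypothesis is automatic when $d_\Gamma=1$). I expect the main obstacle to be the sufficiency direction — specifically, proving that for the generic valuation $v$, \emph{tropical} irreducibility actually holds, since one needs more than algebraic irreducibility of $\VV$: one needs that no coarsening of the regular subdivision of $\mathrm{New}(\VV_{\bf aff})$ induced by $v$ admits a Minkowski decomposition compatible with the coefficients. I would handle this by choosing $v$ so that the lifted coefficients $A_i$ are themselves generic (e.g. working over $k(A_1,\dots,A_{N-1})$ with the $A_i$ genuinely independent transcendentals and $v$ a monomial valuation in them), reducing tropical irreducibility to the combinatorial statement that the trivial-valuation Newton polytope $\mathrm{New}(\VV_{\bf aff})$ — which one computes from the structure of Vandermonde minors, using the algebraic independence results of \cite{tab} — is Minkowski-indecomposable once $d_\Gamma=1$ and $\dim\L_\Gamma\geq 2$.
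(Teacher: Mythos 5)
Your ``only if'' direction is essentially the paper's and is fine: each failed hypothesis yields a factorization of $\VV$ over some field --- for $d_\Gamma>1$ one passes to a field of characteristic $p\mid d_\Gamma$ and uses that $\textrm{Trop}(\VV)$ depends only on the values $v(A_i)$, not on the field --- so the tropicalization is reducible for every $v$. The genuine gap is in the sufficiency direction, which you yourself flag as the main obstacle. Your plan is to start from algebraic irreducibility of $\VV$ (Theorem \ref{main}) and push it forward to tropical irreducibility; but the implication only goes the other way (a reducible polynomial has a reducible tropicalization), and an absolutely irreducible polynomial can perfectly well have a reducible tropicalization (e.g.\ an irreducible conic tropicalizing to a union of two tropical lines). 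Your fallback --- reducing to Minkowski-indecomposability of $\mathrm{New}(\VV_{\bf aff})=\mathrm{conv}(\Gamma)$ under a trivial or monomial valuation --- is false: for $\Gamma=\{(0,0),(1,0),(0,1),(1,1)\}$ one has $d_\Gamma=1$ and $\dim(\L_\Gamma)=2$, yet the Newton polytope is a unit square, which decomposes as a Minkowski sum of two segments. This is precisely why the paper does \emph{not} argue at the level of the Newton polytope but at the level of its regular subdivisions.

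What the paper actually does for sufficiency is purely combinatorial and uses a carefully chosen (rather than ``generic'') valuation: take $v(A_i)$ to be a perturbed paraboloid lift, so that the induced regular subdivision of $\mathrm{conv}(\Gamma)$ is a triangulation with vertex set $\Gamma$. Dually, every ridge of $\textrm{Trop}(\VV)$ lies on exactly three facets whose compatible primitive normal vectors are pairwise linearly independent; the balancing condition $m_1v_1+m_2v_2+m_3v_3=0$ then forces any decomposition $\textrm{Trop}(\VV)=H_1\cup H_2$ to propagate positive weight from one facet of $H_1$ to all ridge-adjacent facets, and ridge-connectedness (available because $\dim(\L_\Gamma)\geq2$) gives $\textrm{Trop}(\VV)=H_1$ as sets. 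Finally, the remaining case $H_1=H_2$ as sets but with split multiplicities is excluded because it would force every edge of the triangulation to have integer length divisible by some $k_1+k_2\geq2$, whence $d_\Gamma\geq k_1+k_2$, contradicting $d_\Gamma=1$. You would need to supply this (or an equivalent) argument; as written, your sufficiency step does not go through.
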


This result is optimal in the following sense: it is known
that Trop$(\{f=0\})$ does not depend on the field, but only on the
values $v(A_i)$'s.
\par
Take for instance $f=\sum_{i=0}^N A_i {\bf Y}^{i}$
over a field of characteristic zero and $d_\Gamma>1$, and let $g:=\sum_{i=0}^N
B_i {\bf Y}^{i}$ be a
polynomial with the same support with coefficients in a field of
characteristic $p|d_\Gamma$. Give to these polynomials valuations $v$ and $v'$
such that
$v'(B_i)=v(A_i)$. In these conditions, we will have
$$\mbox{Trop}(\{g=0\})=\mbox{Trop}(\{f=0\}),$$ but $g=(\sum_{i=0}^N B_i^{1/p}
{\bf Y}^{i/p})^p$ factorizes in the algebraic closure of its field
of definition, hence Trop$(\{f=0\})$ will always be reducible. So,
the tropical criteria will not  help to deduce the irreducibility of
$f$.

The paper is organized as follows: in Section \ref{bertini}, we give explicit
conditions on the irreducibility of the Vandermonde variety. In Section
\ref{sec:proof} we prove Theorem \ref{main}. We conclude by introducing some
tools from Tropical Geometry and by proving Theorem \ref{irred_tropical} in
Section \ref{sec:trop}.

\begin{ack}
We thank David Cox for posing us the problem of the irreducibility
of Vandermonde determinants, J.~I. Burgos and J.~C. Naranjo for useful
discussions, and the anonymous referee for helpful suggestions in order to
improve the
presentation of this work.
\end{ack}

\section{Bertini's Theorem and the irreducibility of the Vandermonde
variety}\label{bertini}
We begin by studying the geometric irreducibility of the variety
defined by $\VV$ in $\overline{k}^{Nn}$. In order to do this, we
will apply one of the several versions of Bertini's theorem given in
\cite{jou}. Recall that (\cite[Definition 4.1]{jou})
\begin{definition}
A $k$-scheme $\mathcal{V}$ over a field $k$ is said to be \textit{geometrically
irreducible} if $\mathcal{V}\otimes_k\overline{k}$ is an irreducible scheme.
\end{definition}

Now we are ready to present the version of Bertini's theorem that we will use.

\begin{theorem}\cite[Th\'eor\`eme 6.3]{jou}\label{ber}
Let $k$ be an infinite field, $\mathcal{V}$ a geometrically irreducible
$k$-scheme of finite type,
$\EE^m_k$ be the affine space
of dimension $m$, and $f:\mathcal{V}\to \EE^m_k$ a $k$-morphism, i.e.
\[
\begin{array}{ccccc}
f&:&\mathcal{V}&\to&\EE^m_k\\
&&z&\mapsto&\big(f_1(z), \ldots, f_m(z)\big)
\end{array}
\]
with $f_i\in\Gamma(\mathcal{V}, {\mathcal O}_{\mathcal{V}})$. If $\mathcal{V}$
is geometrically irreducible and $\dim\big(\overline{f(\mathcal{V})}\big)\geq
2$, then for almost all $\xi\in k^{m+1}$, $f^{-1}\left(\{z\in \EE^m_k:\,
\xi_0+\xi_1z_1+\ldots+\xi_nz_n=0\}\right)$ is geometrically irreducible.
\end{theorem}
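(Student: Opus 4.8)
The plan is to reduce the statement, by a standard ``universal family'' argument, to the geometric irreducibility of a single generic fibre, then to a purely field-theoretic assertion about generic linear combinations of the $f_i$, and finally to attack that assertion with a Lefschetz pencil, the dimension hypothesis entering precisely through the nonemptiness of the base locus of the pencil.

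First I would form the incidence scheme
\[
\mathcal{I}:=\bigl\{(z,\xi)\in\mathcal{V}\times\EE^{m+1}_k\ :\ \xi_0+\xi_1f_1(z)+\cdots+\xi_mf_m(z)=0\bigr\},
\]
whose fibre over a closed point $\xi\in k^{m+1}$ under the second projection $q\colon\mathcal I\to\EE^{m+1}_k$ is exactly $f^{-1}\bigl(\{\xi_0+\xi_1z_1+\cdots+\xi_mz_m=0\}\bigr)$. Solving the defining equation for $\xi_0$ exhibits $\mathcal I\cong\mathcal V\times\EE^m_k$, so $\mathcal I$ is geometrically irreducible of dimension $\dim\mathcal V+m$, and $q$ is dominant (here $\dim\overline{f(\mathcal V)}\ge1$ already suffices). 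By Chevalley's theorem together with the constructibility of the locus of $s\in\EE^{m+1}_k$ over which the fibre of $q$ is geometrically irreducible, that locus contains a dense open set --- hence almost all $k$-rational points, as $k$ is infinite --- as soon as it is dense, i.e.\ as soon as the generic fibre $\mathcal I_\eta$ over $\eta=\operatorname{Spec}k(\EE^{m+1}_k)$ is geometrically irreducible. Thus it suffices to prove the latter.

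Next I would translate this into field theory. Write $F$ for the function field of $\mathcal V_{\mathrm{red}}$; geometric irreducibility of $\mathcal V$ means precisely that $k$ is separably algebraically closed in $F$. From $\mathcal I\cong\mathcal V\times\EE^m_k$ one reads $k(\mathcal I)\cong F(\xi_1,\dots,\xi_m)=:M$, and the comorphism of $q$ identifies $K:=k(\xi_0,\dots,\xi_m)$ with $k\bigl(\xi_1,\dots,\xi_m,\,g\bigr)\subseteq M$, where $g:=\sum_{i=1}^m\xi_if_i$ (recall $\xi_0=-g$ on $\mathcal I$). Since $\mathcal I$ is irreducible and dominates $\EE^{m+1}_k$, the fibre $\mathcal I_\eta$ is irreducible over $K$, and it is geometrically irreducible if and only if $K$ is separably algebraically closed in $M$. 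So the theorem reduces to the statement: \emph{if $k$ is separably closed in $F$ and $\operatorname{trdeg}_k k(f_1,\dots,f_m)=\dim\overline{f(\mathcal V)}\ge2$, then $k\bigl(\xi_1,\dots,\xi_m,\sum_i\xi_if_i\bigr)$ is separably algebraically closed in $F(\xi_1,\dots,\xi_m)$.} One cannot here simply replace $\EE^m_k$ by $\overline{f(\mathcal V)}$ and quote the classical Bertini theorem, since the $f$-preimage of an irreducible set need not be irreducible; the genericity of $\xi$ must be kept, and it is what makes the statement true (for instance, a generic linear combination of the $f_i$ is never a $p$-th power).

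Finally, to prove the field-theoretic statement I would use a Lefschetz pencil. Suppose $\alpha\in M$ is separable algebraic over $K$; spreading out, $\alpha$ defines a non-trivial finite separable cover over a dense open of the $\bigl(\xi_1,\dots,\xi_m,g\bigr)$-space, through which $q$ factors. Restrict everything to a generic line $\xi=\xi^{(0)}+T\xi^{(1)}$ in the $\xi$-space --- equivalently, after compactifying $\mathcal V$, to a generic pencil $\ell\subseteq\check{\mathbb P}^m$ of hyperplanes with base locus the codimension-$2$ linear space $\Lambda_\ell$. Because $\dim\overline{f(\mathcal V)}\ge2$, the set $\Lambda_\ell\cap\overline{f(\mathcal V)}$ is nonempty; choosing $\ell$ generic, one can pick a point $P_0\in f^{-1}(\Lambda_\ell)$ at which $\overline{\mathcal V}$ and the generic member of the pencil of divisors $f^{-1}(H_t)$ are smooth. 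The incidence scheme over $\ell$ is the blow-up of $\overline{\mathcal V}$ along $f^{-1}(\Lambda_\ell)$, and the exceptional $\mathbb P^1$ over $P_0$ maps isomorphically onto $\ell\cong\mathbb P^1$, producing a section of the family that runs through smooth points of its fibres. A geometrically irreducible variety equipped with a morphism to $\mathbb P^1$ admitting such a section has geometrically irreducible generic fibre --- a smooth rational point lies on a single geometric component, which is then stable under the (separable) monodromy, so transitivity forces a unique component --- and this makes the cover of the pencil trivial, i.e.\ $\alpha\in K$, as desired. The hard part is the behaviour in characteristic $p$: one must guarantee that the section meets the \emph{smooth} locus of the generic fibre even when $f$ is inseparable, and, equivalently on the field side, rule out a purely-inseparable-followed-by-separable element sneaking into $M/K$. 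This is exactly where the genericity of the coefficients $\xi_i$ is indispensable, and it is handled systematically by passing to separable closures of function fields, as in \cite[Th\'eor\`eme 6.3]{jou} and the results preceding it.
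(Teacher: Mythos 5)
First, note that the paper does not prove this statement at all: it is quoted verbatim, with citation, from Jouanolou's book \cite{jou} (Th\'eor\`eme 6.3), so the only meaningful comparison is with that source. Your reduction steps are sound and in fact mirror how the result is organized there: the incidence scheme $\mathcal{I}\cong\mathcal{V}\times\EE^m_k$ is geometrically irreducible, the constructibility of the locus of parameters with geometrically irreducible fibre together with the infiniteness of $k$ reduces everything to the generic fibre, and the generic fibre is geometrically irreducible if and only if $K=k(\xi_0,\dots,\xi_m)=k\bigl(\xi_1,\dots,\xi_m,\sum_i\xi_if_i\bigr)$ is separably algebraically closed in $M=F(\xi_1,\dots,\xi_m)$. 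Up to this point the argument is correct.

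The genuine gap is in the final step, which is where the whole difficulty of the theorem lives. The theorem assumes only that $\mathcal{V}$ is a geometrically irreducible $k$-scheme of finite type: no reducedness, no geometric reducedness, no smoothness. Hence $\mathcal{V}$ (and any compactification $\overline{\mathcal{V}}$) may have no smooth points whatsoever (e.g.\ a geometrically non-reduced $\mathcal{V}$ over an imperfect field), and even when $\mathcal{V}$ is smooth, in characteristic $p$ the generic member $f^{-1}(H_t)$ of the pencil can be everywhere non-smooth --- for instance when every $f_i$ is a $p$-th power in $\Gamma(\mathcal{V},{\mathcal O}_{\mathcal{V}})$, so that every member of the pencil is non-reduced. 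In these cases there is no point $P_0$ ``at which $\overline{\mathcal{V}}$ and the generic member of the pencil are smooth'', the exceptional section cannot be arranged to meet the smooth locus of the fibres, and the monodromy lemma you invoke has nothing to act on. You acknowledge this yourself and write that the hard part ``is handled \dots as in Th\'eor\`eme 6.3'' of \cite{jou} --- that is, precisely the step where the characteristic-$p$ subtleties and the role of the genericity of the $\xi_i$ must be controlled is delegated to the statement being proved, which makes the proposal incomplete (indeed circular) at its crux. There are also smaller glosses: the identification of the incidence scheme over the pencil with a blow-up, and the fact that the section $\{P_0\}\times\ell$ lies in the irreducible component dominating $\ell$, again need regular-sequence/smoothness hypotheses; and the passage from geometric irreducibility of the fibre over the generic point of a generic line back to the generic fibre over $\EE^{m+1}_k$ (i.e.\ that a putative $\alpha$ remains separable of the same degree after restriction to the line) is asserted rather than proved. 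So the proposal sets up the correct framework but does not establish the theorem under its actual hypotheses.
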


\begin{definition}
Let $\Gamma=\{\gamma_1, \ldots, \gamma_N\}\subset\NN^n$, ${\bf Y}:=(Y_1, \ldots,
Y_n)$ a set of $n$ variables and $A_i, \, 1\leq i\leq N$ another set of
indeterminates. The \textit{generic polynomial supported in} $\Gamma$ is defined
as
\[P({\bf Y}, \Gamma):=\sum_{i=1}^NA_i\, {\bf Y}^{\gamma_i}.\]
\end{definition}

\begin{proposition}\label{jouap}
If $\dim(\L_\Gamma)\geq2$  and  $\gcd({\bf X}^{\gamma_i})_{1\leq i\leq N}=1$,
then $P({\bf Y}, \Gamma)$ defines an irreducible set in
$\overline{k(A_1,\ldots,A_N)}^n$.
\end{proposition}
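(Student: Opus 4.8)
The plan is to realize $P(\mathbf Y,\Gamma)=0$ as the fibre of a suitable morphism and invoke the version of Bertini's theorem stated as Theorem~\ref{ber}. Work over the base field $k_0:=k(A_1,\dots,A_N)$, which we may assume infinite (if $k$ is finite, pass to a purely transcendental, hence infinite, extension; irreducibility of a scheme is unchanged by extension of scalars up to the already-established notion of geometric irreducibility, and in any case the $A_i$ are transcendental so $k_0$ is infinite automatically when $N\ge 1$). Let $\mathbb T^n=\mathrm{Spec}\,k_0[Y_1^{\pm1},\dots,Y_n^{\pm1}]$ be the torus; since $\gcd(\mathbf X^{\gamma_i})_{i}=1$ forces $\overline\gamma=0$, the hypersurface $\{P=0\}$ has no monomial component, so it is enough to prove irreducibility of $\{P=0\}\cap\mathbb T^n$ and then take closure in $\mathbb A^n$. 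Normalizing one exponent to the origin (say subtract $\gamma_1$; this is an automorphism of $\mathbb T^n$), we may assume $\gamma_1=0$.

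Next I would build the morphism. Because $\dim(\mathcal L_\Gamma)\ge 2$, after the translation the vectors $\gamma_2-\gamma_1,\dots,\gamma_N-\gamma_1$ span a sublattice of $\ZZ^n$ of rank $m:=\dim(\mathcal L_\Gamma)\ge 2$; composing with a monomial change of coordinates (a $\mathrm{GL}_n(\ZZ)$ action on $\mathbb T^n$) I can arrange that all exponents $\gamma_i$ lie in the coordinate subspace spanned by the first $m$ coordinates, so $P$ genuinely involves only $Y_1,\dots,Y_m$. Thus $\mathcal V:=\mathbb T^m$ with the monomial map $f:\mathbb T^m\to\mathbb A^{N-1}_{k_0}$, $f=(\mathbf Y^{\gamma_2},\dots,\mathbf Y^{\gamma_N})$ (recall $\gamma_1=0$), has each coordinate in $\Gamma(\mathcal V,\mathcal O_{\mathcal V})$, and the fibre $f^{-1}\{\xi_0+\xi_1 z_1+\dots+\xi_{N-1}z_{N-1}=0\}$ over the coefficient vector $(\xi_0,\xi_1,\dots,\xi_{N-1})=(A_1,A_2,\dots,A_N)$ is exactly $\{P=0\}\cap\mathbb T^m$. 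The torus $\mathbb T^m$ is geometrically irreducible, so the only remaining hypothesis to check is $\dim\overline{f(\mathbb T^m)}\ge 2$.

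For that dimension count I would argue that the fibres of $f$ are finite: since the exponent vectors $\gamma_2,\dots,\gamma_N$ span the full lattice $\ZZ^m$ (by our normalization), the monomial map $f$ is a finite-to-one map onto its image — concretely, knowing all $\mathbf Y^{\gamma_i}$ determines $\mathbf Y^{e_j}$ up to roots of unity for each standard basis vector $e_j$, hence $\dim\overline{f(\mathbb T^m)}=\dim\mathbb T^m=m\ge 2$. Here is where the results of~\cite{tab} on algebraic independence of maximal Vandermonde minors enter: they give precisely the statement that $m$ of the monomials $\mathbf Y^{\gamma_i}$ can be chosen algebraically independent, which is the clean way to see $\dim\overline{f(\mathbb T^m)}\ge 2$ and will be reused later. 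Now Theorem~\ref{ber} applies and gives geometric irreducibility of the fibre for \emph{almost all} $\xi\in k_0^{\,N}$; since the $A_i$ are independent transcendentals over $k$, the point $(A_1,\dots,A_N)$ is a generic point of $\mathbb A^N$ and hence lies in every nonempty Zariski-open subset defined over $k$, so the conclusion holds for our specific $\xi$. Taking Zariski closure back in $\mathbb A^n$ (which preserves irreducibility) finishes the proof.

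The main obstacle is the transfer from "almost all $\xi$" to "the generic $\xi=(A_i)$": one must be careful that the open locus produced by Bertini is defined over $k$ (not over some larger field into which the $A_i$ were already adjoined), so that genericity of the $A_i$ guarantees membership. This is handled by noting that the morphism $f$ and the space $\EE^m_k$ in Theorem~\ref{ber} are defined purely in terms of the combinatorial data $\Gamma$ over $k$, independently of the $A_i$; the $A_i$ only play the role of the point $\xi$. A secondary point of care is the reduction to the torus and the passage between $\mathbb A^n$, $\mathbb A^m$ and $\mathbb T^m$: one must check that no component is lost or gained, which is where the hypothesis $\gcd(\mathbf X^{\gamma_i})_i=1$ is used crucially.
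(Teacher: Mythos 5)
Your proof is correct and takes essentially the same route as the paper's: restrict to the torus, map it to affine space by the monomials $\mathbf{Y}^{\gamma_i-\gamma_1}$, apply Jouanolou's Bertini theorem, and use the gcd condition to pass back from the torus to $\overline{k}^n$ (the paper verifies $\dim\overline{f(\mathcal{V})}\geq 2$ via the Smith normal form of the exponent matrix rather than your finite-fibre count, but the two are interchangeable). The only slip is the claim that after normalization the exponents span all of $\ZZ^m$ --- they span only a finite-index sublattice --- but your own justification (the monomials determine each coordinate up to roots of unity) needs only full rank, so the argument stands.
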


\begin{proof}
Note that $\dim(\L_\Gamma)\geq2$ implies $N\geq3$, and moreover that there are
three components of $\Gamma$ that are not collinear. We pick a triple of vectors
with this property which we suppose w.l.o.g. that they are $\gamma_1, \,
\gamma_2, \, \gamma_3$.

In order to use Theorem \ref{ber}, let $\mathcal{V}:=\mbox{Spec} \left (k \big
[Y_1^{\pm1}, \ldots, Y_n^{\pm1} \big] \right)$ be the torus $(k^*)^n$, and set
\[\begin{array}{ccccc}
f&:&\mathcal{V}&\to&\EE^{N-1}_k\\ \\
&&z&\mapsto&\left(z^{\gamma_2-\gamma_1}, z^{\gamma_3-\gamma_1}, \dots,
z^{\gamma_N-\gamma_1}\right).
\end{array}\]

By hypothesis, the rank of the matrix $\begin{pmatrix} \gamma_2-\gamma_1\\
\gamma_3-\gamma_1\\ \ldots\\
\gamma_N-\gamma_1\end{pmatrix} $ is at least two. So, the top two by two
submatrix of its Smith normal
form is
$\begin{vmatrix}d_1 & 0\\ 0 & d_2\end{vmatrix}=d_1d_2\neq 0$. Hence, it follows
that, under a suitable monomial
change of coordinates, the map $f$ is of the form $z\mapsto (z_1^{d_1},
z_2^{d_2}, \ldots)$
and this shows that the dimension of the image of $f$ is greater than one, so we
can apply Bertini's Theorem and have that,
for almost all $\xi \in \overline{k}^{N}$, the polynomial $Q(\xi, {\bf Y}):=
\sum_{i=1}^N \xi_i {\bf Y}^{\gamma_i}$ defines
an irreducible set in $(\overline{k}^*)^n$. The fact that the $\gcd\big({\bf
Y}^{\gamma_i}\big)_{1\leq i\leq N}=1$ implies that $Q(\xi, {\bf Y})$ defines an
irreducible set also in $\overline{k}^n$ for almost all $\xi$, and hence the
claim holds for $P({\bf Y}, \Gamma)$.

\end{proof}

\begin{proposition}\label{desc}
Let $\Gamma=\big(\gamma_1, \ldots, \gamma_N\big)\subset{\NN^n}$ with $N \geq 3$
and $\gcd\big({\bf Y}^{\gamma_i}\big)_{ 1\leq i\leq N}=1$.
Then
\begin{itemize}
\item If $\dim(\L_\Gamma)=1$, then $P({\bf Y}, \Gamma)$ factorizes
in $\overline{k(A_1, \ldots, A_N)}[{\bf Y}]$.
\item If $\dim(\L_\Gamma)>1$, then
\begin{itemize}
\item if $\ch (k)$ does not divide $d_\Gamma$, then $P({\bf Y}, \Gamma)$ is
absolutely irreducible.
\item If $\ch (k)=p\, |\, d_\Gamma$, then $P({\bf Y}, \Gamma)= R({\bf
Y})^{p^r}$, with $p^r\, |\, d_\Gamma$, $p^{r+1}$
not dividing $d_\Gamma$,
and $R({\bf Y})\in\overline{k(A_1, \ldots, A_N)}[{\bf Y}]$  irreducible of
support $\frac{1}{p^r}\Gamma$.
\end{itemize}
\end{itemize}
\end{proposition}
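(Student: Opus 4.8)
The plan is to reduce each case to Proposition \ref{jouap} via a monomial change of variables, and then to handle the inseparable case by a Frobenius‑descent argument. First, consider the case $\dim(\L_\Gamma)=1$. Since $\gcd({\bf Y}^{\gamma_i})=1$ but $\Gamma$ lies on an affine line, after a monomial substitution $Y_j \mapsto \prod_\ell Z_\ell^{m_{j\ell}}$ given by a suitable unimodular matrix we may assume all exponent vectors are supported on a single variable, say $Z_1$, so that $P({\bf Y},\Gamma)$ becomes, up to a unit monomial, a univariate polynomial $\sum_i A_i Z_1^{e_i} \in \overline{k(A_1,\dots,A_N)}[Z_1]$ of degree $\geq 2$ (because $N\geq 3$ and the $\gamma_i$ are distinct). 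A univariate polynomial of degree $\geq 2$ over any field always factors over the algebraic closure, so this establishes the first bullet. (One must check that a monomial change of variables does not affect irreducibility over the algebraic closure — this is routine, since it induces an automorphism of the Laurent polynomial ring, and the $\gcd=1$ hypothesis controls the passage between $\overline{k}^n$ and the torus.)

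For the case $\dim(\L_\Gamma)>1$ with $\ch(k)\nmid d_\Gamma$: by definition of $d_\Gamma$ the vectors $\gamma_i-\overline\gamma=\gamma_i$ (using $\overline\gamma=0$) all lie in $d_\Gamma \NN^n$ after translating, i.e. $\Gamma = d_\Gamma\,\Gamma'$ where $\Gamma'$ is primitive. The substitution ${\bf Y}\mapsto {\bf Y}$ does nothing here; rather, the content is: when $\gcd$ over the differences $\gamma_i-\gamma_1$ has Smith form with invariant factors $d_1\mid d_2\mid\cdots$, one has $d_\Gamma=d_1$ (or more precisely $d_\Gamma$ is the gcd of all these invariant factors / all the $\gamma_i-\gamma_j$). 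I would invoke Proposition \ref{jouap} directly when $d_\Gamma=1$, and otherwise write $\gamma_i = d_\Gamma\,\gamma_i'$ so that, setting $Y_j = T_j$ formally, $P({\bf Y},\Gamma) = \widetilde P({\bf Y}^{d_\Gamma})$ where $\widetilde P(\underline T) = \sum_i A_i \underline T^{\gamma_i'}$ is the generic polynomial supported in $\Gamma'$, which by Proposition \ref{jouap} is absolutely irreducible (note $\dim \L_{\Gamma'} = \dim \L_\Gamma \geq 2$ and $\Gamma'$ is primitive). So we are reduced to the purely algebraic question: if $\ch(k)\nmid d$ and $\widetilde P(\underline T)$ is irreducible in $\overline{k(A)}[\underline T]$, is $\widetilde P({\bf Y}^d)$ irreducible in $\overline{k(A)}[{\bf Y}]$? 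Since $d$ is prime to the characteristic, the extension $\overline{k(A)}({\bf Y})/\overline{k(A)}(\underline T)$ with $T_j = Y_j^d$ is a tower of Kummer‑type extensions; the key point is that $\overline{k(A)}$ contains all roots of unity and that $\widetilde P$ remains irreducible over the purely transcendental (hence "large") base, so the standard criterion for irreducibility of $f(Y^d)$ — namely that $f$ is irreducible and none of its roots lie in the subgroup generated by $d$‑th powers times roots of unity in a way that forces a drop — applies. I expect this Kummer‑descent bookkeeping to be the main technical obstacle, and would isolate it as a small lemma: for $\Gamma' $ primitive with $\dim\geq 2$, the generic polynomial's roots generate the full coordinate torus, so no such degeneration occurs.

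Finally, the inseparable case $\ch(k)=p\mid d_\Gamma$. Write $d_\Gamma = p^r d'$ with $p\nmid d'$ and let $\Gamma'' = \tfrac{1}{p^r}\Gamma$, which is still a subset of $\NN^n$ with $\dim\L_{\Gamma''}=\dim\L_\Gamma\geq 2$ and $d_{\Gamma''}=d'$ coprime to $p$. By the previous case, $R({\bf Y}) := \sum_i A_i^{1/p^r}\,{\bf Y}^{\gamma_i/p^r}$ is absolutely irreducible of support $\tfrac{1}{p^r}\Gamma$ over $\overline{k(A_1^{1/p^r},\dots,A_N^{1/p^r})} = \overline{k(A_1,\dots,A_N)}$ (the field is unchanged because $A_i\mapsto A_i^{1/p}$ is the inverse Frobenius, an isomorphism on a perfect field — and $\overline{k(A)}$ is perfect). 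Then in characteristic $p$ the Frobenius identity gives $R({\bf Y})^{p^r} = \sum_i A_i\,{\bf Y}^{\gamma_i} = P({\bf Y},\Gamma)$ exactly, since raising to the $p^r$ power is additive and sends $A_i^{1/p^r}\mapsto A_i$, ${\bf Y}^{\gamma_i/p^r}\mapsto {\bf Y}^{\gamma_i}$. It remains only to check $p^r \mid d_\Gamma$ and $p^{r+1}\nmid d_\Gamma$, which is the definition of $r$, and that $R$ has the stated support, which is immediate. The only subtlety to verify is that $P({\bf Y},\Gamma)$ has no further factorization beyond this $p^r$‑th power, i.e. that $R$ is genuinely irreducible and not itself a proper power — but that is exactly the content of the $\ch(k)\nmid d_{\Gamma''}$ case applied to $R$, so the argument closes.
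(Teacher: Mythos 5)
Your first bullet and your Frobenius-descent treatment of the case $p\mid d_\Gamma$ match the paper in spirit (the paper instead writes $P=R^{D}$ with $D=p^rq$ and shows $q=1$, but your direct construction of $R=\sum A_i^{1/p^r}{\bf Y}^{\gamma_i/p^r}$ is a clean equivalent, granted the separable case). The problem is that your separable case ($\dim\L_\Gamma\geq 2$, $\ch(k)\nmid d_\Gamma$) has a genuine gap at its core: Proposition \ref{jouap} asserts only that $P({\bf Y},\Gamma)$ \emph{defines an irreducible set}, not that it is an irreducible polynomial. From set-irreducibility one only gets $P=R^{D}$ with $R$ irreducible and $D\geq 1$, and excluding $D>1$ is precisely the content of the proposition — it cannot be read off from Proposition \ref{jouap}, even when $d_\Gamma=1$ and even in characteristic zero. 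The paper closes this gap with a separability argument: since $p\nmid d_\Gamma$, some exponent coordinate is prime to $p$, so $\partial P/\partial Y_1\neq 0$; if $D>1$ then $R$ divides both $P$ and $\partial P/\partial Y_1$, forcing their resultant in $Y_1$ to vanish, which contradicts the algebraic independence of the $A_i$'s and the remaining variables. Nothing in your write-up plays this role.

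A secondary issue: your reduction $P({\bf Y},\Gamma)=\widetilde P({\bf Y}^{d_\Gamma})$ replaces the problem by a Kummer-descent claim ("$\widetilde P$ irreducible $\Rightarrow$ $\widetilde P({\bf Y}^{d})$ irreducible") that you yourself flag as the main technical obstacle and do not prove. As stated it is false for general irreducible $\widetilde P$ (e.g.\ $T_1-1$ pulls back to $Y_1^{d}-1$), so it would need the genericity of the $A_i$'s in an essential way — i.e.\ it is not easier than the original statement. It is also unnecessary: Proposition \ref{jouap} applies to $\Gamma$ itself (its hypotheses are $\dim\L_\Gamma\geq2$ and $\gcd({\bf Y}^{\gamma_i})=1$, with no condition on $d_\Gamma$), so the set $\{P=0\}$ is already irreducible without any substitution, and the entire remaining difficulty is the multiplicity $D$, which is where the derivative/resultant argument must be supplied.
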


\begin{proof}
If the vertices are contained in an affine line, then, via a monomial
transformation, we can reduce $P({\bf Y}, \Gamma)$ to a univariate polynomial,
which always factorizes (due to the fact that $N>2$) as a product of linear
factors with coefficients in $\overline{k(A_1, \ldots, A_N)}.$ The variety
defined by this polynomial may be reducible or not, depending on the
inseparability of this polynomial.

Suppose now that $\L_\Gamma$ has affine dimension at least two. Then we can
apply
the previous proposition, and conclude that the variety defined
by $P({\bf Y}, \Gamma)$ is irreducible over $\overline{k(A_1, \ldots, A_N)}$.

Hence, there exists an irreducible polynomial $R({\bf Y})\in\overline{k(A_1,
\ldots, A_N)}[{\bf Y}]$
and $D\in\NN$ such that
$$
P({\bf Y}, \Gamma)= R({\bf Y})^{D}.
$$

It is clear that $R({\bf Y})$ cannot be a monomial. If $D=1$ then we are done.
Suppose then $D>1$ and $p:=\ch (k)$  coprime with $d_\Gamma$. We can suppose
w.l.o.g. that $p$ does not divide the
first coordinate of one of the $\gamma_{i}$'s. But then, we have
\begin{equation}\label{tit}
0\neq\frac{\partial P({\bf Y}, \Gamma)}{\partial {\bf
Y}_1}=\sum_{i=1}^N\gamma_{i1}A_i\, {\bf Y}^{\gamma_i-{\bf e}_1}=DR({\bf
Y})^{D-1}\frac{\partial R({\bf Y})}{\partial {\bf Y}_1}.
\end{equation}
In particular, we get that $p$ does not divide $D$. As $R({\bf Y})$ is not a
monomial, it turns out
that $\frac{\partial P({\bf Y}, \Gamma)}{\partial {\bf Y}_1}$ has at least two
nonzero terms.
This implies that $P({\bf Y}, \Gamma)$ has at least two different monomials with
positive degree on ${\bf Y}_1$, so the degree of $R({\bf Y})$ with respect to
the first variable is positive and, due to (\ref{tit}), the same applies to
$\frac{\partial P({\bf Y}, \Gamma)}{\partial {\bf Y}_1}$.
\par
We can then eliminate ${\bf Y}_1$ by computing the univariate (or classical)
resultant (see \cite{GKZ}) of
the polynomials $P({\bf Y}, \Gamma)$ and $\frac{\partial P({\bf Y},
\Gamma)}{\partial {\bf Y}_1}$ with
respect to the first variable. This is a polynomial in $k[A_1,\ldots,A_n,{\bf
Y}_2,\ldots,{\bf Y}_n]$ which must be identically zero, as $R({\bf Y})$ is a
common factor of both $P({\bf Y}, \Gamma)$ and $\frac{\partial P({\bf Y},
\Gamma)}{\partial {\bf Y}_1}$. This is a contradiction with the fact that
$A_1,\ldots,A_n,{\bf Y}_2,\ldots,{\bf Y}_n$ are algebraically independent.

\par
Suppose now that $p:=\ch (k)\, |\, d_\Gamma$, let $r$ be the maximum such that
$p^r\,
|\, D$, so we can write
$D=p^r\,q$ with $(p,q)=1$. Write $R({\bf Y})=\sum_{j=1}^M
R_j {\bf
Y}^{\gamma'_j}$. We have then
\[P({\bf Y}, \Gamma)=\sum_{i=1}^NA_i\, {\bf Y}^{\gamma_i}=\left(\sum_{j=1}^M R_j
{\bf Y}^{\gamma'_j} \right)^{p^rq} =\left(\sum_{j=1}^M {R_j}^{p^r} {\bf
Y}^{p^r\gamma'_j} \right)^{q}.\]
From here, we deduce that $p^r$ divides $d_\Gamma$. Moreover, after dividing all
the exponents by $p^r$ we get
\[P_r({\bf Y}, \Gamma):=\sum_{i=1}^NA_i\, {\bf Y}^{\frac{\gamma_i}{p^r}}=
\left(\sum_{j=1}^M {R_j}^{p^r} {\bf Y}^{\gamma'_j} \right)^{q}.\]
It turns out that $P_r({\bf Y}, \Gamma)=P({\bf Y},\frac{1}{p^r}\Gamma)$. An
argument like above over $\frac{1}{p^r}\Gamma$
shows that $q$ cannot be different  than one if the $A_i$ are algebraically
independent. This completes the proof.
\end{proof}

As in the introduction, for $\ell=1, \ldots, N$ we set
\[\Delta_\ell( \underline{{\bf X}}, \Gamma):= \det \left({{\bf X}_i}^{\gamma_j}
\right)_{\begin{array}{l} 2 \leq i\leq N, 1\leq j\leq N\\
j\neq\ell\end{array}},\]
i.e. $\Delta_\ell$ is
the minor obtained by deleting the first row and $\ell$-th column in the
Generalized Vandermonde matrix.

\begin{theorem}\label{minors}
For any index $\ell_0$, the family $\{\Delta_\ell/\Delta_{\ell_0}\ :\ \ell=1,
\ldots N, \ell\neq \ell_0\}$ is algebraically independent over any field $k$.
\end{theorem}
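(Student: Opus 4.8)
The plan is to recast the family of quotients geometrically and reduce the statement to the dominance of a single rational map. Since a polynomial relation with coefficients in $k$ is \emph{a fortiori} one with coefficients in $\overline{k}$, it suffices to prove algebraic independence over $\overline{k}$; so I assume $k=\overline{k}$ and take $\ell_0=N$. Grouping $(X_{ij})_{2\le i\le N,\,1\le j\le n}$ into the $n$-tuples ${\bf X}_2,\dots,{\bf X}_N$, set $M:=\big({\bf X}_i^{\gamma_j}\big)_{2\le i\le N,\,1\le j\le N}$. Substituting ${\bf X}_1={\bf X}_i$ into the expansion $\VV=\sum_j(-1)^{\sigma_j}\Delta_j{\bf X}_1^{\gamma_j}$ produces a determinant with two equal rows, so $\sum_{j=1}^N(-1)^{\sigma_j}\Delta_j\,{\bf X}_i^{\gamma_j}=0$ for each $i\ge2$; hence $\big((-1)^{\sigma_j}\Delta_j\big)_j$ spans $\ker M$ wherever $M$ has rank $N-1$. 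Thus the rational map $\psi\colon{\bf X}=({\bf X}_2,\dots,{\bf X}_N)\mapsto\big[(-1)^{\sigma_1}\Delta_1:\cdots:(-1)^{\sigma_N}\Delta_N\big]\in\mathbb{P}^{N-1}$ sends ${\bf X}$ to the point $[c]$ at which $P_c:=\sum_jc_j{\bf Y}^{\gamma_j}$ vanishes on every ${\bf X}_i$, and up to nonzero scalars the $\Delta_\ell/\Delta_N$ are the $\psi$-pullbacks of the affine coordinates $c_\ell/c_N$ of $\mathbb{P}^{N-1}$, which are algebraically independent over $\overline{k}$. Since $\psi^{*}$ is injective once $\psi$ is dominant, and algebraic independence is unaffected by nonzero scalars, the theorem follows as soon as we show that $\psi$ is \textbf{dominant}.

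Next I would pass to the case $n=1$. Choose $w\in\NN^n$ with large positive entries such that $\langle w,\gamma_1\rangle,\dots,\langle w,\gamma_N\rangle$ are pairwise distinct (the bad $w$ lie in the finitely many hyperplanes $\langle w,\gamma_j-\gamma_{j'}\rangle=0$) and restrict $\psi$ along the monomial curve ${\bf X}_i=(s_i^{w_1},\dots,s_i^{w_n})$. This turns $\Delta_\ell(\underline{\bf X},\Gamma)$ into the one-variable minor $\Delta_\ell(\underline s,\Gamma_w)$ attached to the exponent set $\Gamma_w:=\{\langle w,\gamma_j\rangle\}_j\subset\NN$, and a dependence among the $\Delta_\ell(\underline{\bf X},\Gamma)/\Delta_N(\underline{\bf X},\Gamma)$ would pull back to one among the $\Delta_\ell(\underline s,\Gamma_w)/\Delta_N(\underline s,\Gamma_w)$; so it suffices to treat $n=1$. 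For $n=1$, subtracting $\min\Gamma$ merely pulls a common monomial factor out of every $\Delta_\ell$, which cancels in the quotients, and the substitution $s_i\mapsto s_i^{d_\Gamma}$ is dominant on $(\overline{k}^{*})^{N-1}$, hence induces an injection of function fields; so we may replace $\Gamma$ by $\tfrac1{d_\Gamma}(\Gamma-\min\Gamma)$ and assume $n=1$, $0\in\Gamma$ and $\gcd(\Gamma)=1$.

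The crux is then that the generic polynomial $P_c=\sum_jc_jY^{\gamma_j}$ is \emph{separable} over $\overline{k}(c_1,\dots,c_N)$. Suppose $\alpha$ is a common root of $P_c$ and $P_c'=\sum_{j\ge2}\gamma_jc_jY^{\gamma_j-1}$. Since $\gcd(\Gamma)=1$, not every $\gamma_j$ is divisible by $\ch(k)$, so $P_c'\not\equiv0$ and all its coefficients lie in $\overline{k}(c_2,\dots,c_N)$; hence $\alpha$ is algebraic over $\overline{k}(c_2,\dots,c_N)$, and then so is $c_1=-\sum_{j\ge2}c_j\alpha^{\gamma_j}$, contradicting the algebraic independence of the $c_j$. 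Therefore the discriminant of $P_c$ is a nonzero polynomial in $c$, and for $c$ in a dense open subset of $\overline{k}^N$ the specialized $P_c$ has $D:=\max_j\gamma_j\ge N-1$ distinct roots $r_1,\dots,r_D$, all nonzero because $P_c(0)=c_1\ne0$. On each row the $D\times N$ matrix $(r_a^{\gamma_j})$ annihilates $c$, so it has rank $\le N-1$, and in fact rank exactly $N-1$: a kernel of dimension $2$ would give two linearly independent polynomials supported on $\Gamma$ vanishing at all the $r_a$, each hence a scalar multiple of $\prod_a(Y-r_a)$ and so proportional, a contradiction. Consequently $N-1$ of the $r_a$ can be taken as a point $\underline s$ at which $M(\underline s)$ has full rank and $P_c(s_i)=0$ for all $i$, i.e.\ $\psi(\underline s)=[c]$; letting $[c]$ range over this dense set shows that $\psi$ is dominant, which finishes the proof.

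I expect the real obstacle to be the separability statement in the third paragraph: it is precisely there that the hypothesis $d_\Gamma=1$ (restored by the normalization) is used, and where positive characteristic would otherwise break the argument — for instance, with $n=1$, $\Gamma=\{0,2\}$ and $\ch(k)=2$ one has $\Delta_1/\Delta_2=X_{21}^{2}$, which is transcendental over $k$ yet is killed by a naive Jacobian test, so the argument genuinely needs the separability that the $d_\Gamma$-reduction provides. Everything else — Cramer's rule, the choice of $w$, the $d_\Gamma$-reduction, and the final rank count — is routine bookkeeping.
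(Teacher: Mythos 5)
Your proof is correct, and it takes a genuinely different route from the paper's. The paper forms the graph variety $\mathcal{V}$ of the map $({\bf X}_2,\ldots,{\bf X}_N)\mapsto\big(\Delta_1/\Delta_N,\ldots,\Delta_{N-1}/\Delta_N\big)$, observes that its function field is $k({\bf X}_2,\ldots,{\bf X}_N)$, of transcendence degree $n(N-1)$, and uses the Cramer relations $f({\bf X}_\ell)\in\mathcal{I}$ to show the whole field is algebraic over $k({\bf X}_0,{\bf a})$, a set of exactly $(n-1)(N-1)+(N-1)=n(N-1)$ elements; the count forces $\{{\bf X}_0,{\bf a}\}$, and in particular ${\bf a}$, to be algebraically independent. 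That is a pure transcendence-degree argument, uniform in the characteristic, with no reductions. You instead prove the equivalent statement that the Cramer map $\psi$ to $\mathbb{P}^{N-1}$ is dominant, by exhibiting a preimage of a generic coefficient vector $[c]$: you restrict to a monomial curve to reduce to $n=1$, normalize so that $0\in\Gamma$ and $\gcd(\Gamma)=1$, prove separability of the generic univariate polynomial supported on $\Gamma$ (the nice step being that a common root of $P_c$ and $P_c'$ would make $c_1$ algebraic over $\overline{k}(c_2,\ldots,c_N)$), and then select $N-1$ of its $D\ge N-1$ distinct nonzero roots at which the interpolation matrix has rank $N-1$. Your route is longer and must confront positive characteristic explicitly (via the $d_\Gamma$-normalization and the separability argument), but it is constructive: it realizes a generic point of $\mathbb{P}^{N-1}$ as the coefficient vector of the unique (up to scale) polynomial supported on $\Gamma$ vanishing on an explicit node set, which makes the ``generic interpolation'' content of the theorem visible. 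Both arguments, yours and the paper's, implicitly assume the $\gamma_j$ are pairwise distinct, without which all the $\Delta_\ell$ degenerate.
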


\begin{proof}
Suppose without loss of generality that $\ell_0=N$. It is easy to see that
$\Delta_N$ does not
define the zero function on $\overline{k}^{(n-1)N}$ even if $\ch (k)>0$.

Let $\mathcal{V}$ be the Zariski image of the rational map:
\[\begin{matrix}
\overline{k}^{n(N-1)}&\longrightarrow & \overline{k}^{n(N-1)+(N-1)}&\\
({\bf X}_2, \ldots, {\bf X}_{N})& \mapsto & \left({\bf X}_2, \ldots, {\bf
X}_{N},
\frac{\Delta_{1}}{\Delta_{N}}, \ldots,
\frac{\Delta_{N-1}}{\Delta_{N}}\right)
\end{matrix}\]
It is clear that this is a birational map between  $\overline{k}^{n(N-1)}$ and
$\mathcal{V}$. Let $\mathcal{I}$ be the ideal of $\mathcal{V}$ in $k[{\bf X}_2,
\ldots, {\bf X}_N, a_1, \ldots, a_{N-1}]$. $\mathcal{I}$ is a prime ideal that
contains the polynomials $\Delta_i-a_i\Delta_{N}, i< N$ and -by Cramer's rule-
$f({\bf X}_{\ell})={\bf X}_\ell^{\gamma_N}+\sum_{i=1}^{N-1}a_i {\bf
X}_\ell^{\gamma_i}$, $2\leq \ell \leq N.$ Let ${\bf a}=\{a_1, \ldots,
a_{N-1}\}$. By construction, the field of rational functions of $\mathcal{V}$ is
 isomorphic to the field of fractions of the integer domain
\[\mathbb{L}=\textrm{Frac}\left(\frac{k[{\bf X}_2, \ldots, {\bf X}_N, {\bf
a}]}{\mathcal{I}}
\right)\simeq k({\bf X}_2, \ldots, {\bf X}_N).\]
In particular, $({\bf X}_2, \ldots, {\bf X}_N)$ is a transcendence basis of $k
\subset \mathbb{L}$ and the dimension of $\mathbb{L}$ is $n(N-1)$. For each
index $2\leq \ell\leq N$, choose one variable $X_{\ell, j_\ell}$ appearing in
$f({\bf X}_l)$. Denote by ${\bf X}_0=\{{\bf X}_2, \ldots, {\bf X}_N\}\setminus
\{X_{2, j_2}, \ldots, X_{N, j_N}\}$ the remaining variables $X_{i, j}$ not
chosen. As an element in $\mathbb{L}$, $X_{\ell, j_\ell}$ is algebraic over
$k({\bf X}_0, {\bf a})$, because $f(X_{\ell, j_\ell})\in \mathcal{I}$. So
$\mathbb{L}$ itself is an algebraic extension of $k({\bf X}_0, {\bf a})$. The
set $\{{\bf X}_0, {\bf a}\}$ is of cardinal $(n-1)(N-1)+(N-1)=n(N-1)$. So it is
a transcendence basis of $\mathbb{L}$ over $k$. In particular, this means that
the set $\{a_1, \ldots a_{N-1}\}$ is algebraically independent over
$\mathbb{L}$, and hence $\{\frac{\Delta_1}{\Delta_N},\ldots,
\frac{\Delta_{N-1}}{\Delta_N}\}$ is algebraically independent over $k$.
\end{proof}


\section{Proof of Theorem \ref{main}}\label{sec:proof}
With all the preliminaries given in Section \ref{bertini}, we can
prove the main theorem. It is clear that, if any of the three
conditions in the statement of Theorem \ref{main} fail to hold,
then $\VV$ factorizes.

Suppose then that these conditions are satisfied. By developing
$\VV$ as a polynomial in the variables indexed by ${\bf X}_1$, we
have the following
$$\VV=\sum_{i=1}^N(-1)^{\sigma_i}\Delta_i{\bf X}_1^{\gamma_i}, $$
with $\sigma_i\in\{0, 1\}$. Hence, we can regard $\VV$ as the
polynomial $P({\bf X}_1, \Gamma)$ specialized under $A_i\mapsto
(-1)^{\sigma_i}\Delta_i$.

As the family $\left((-1)^{\sigma_i}\Delta_i/\Delta_N\right)_{1\leq
i\leq N-1}$ is algebraically independent (due to Theorem
\ref{minors}), the polynomial ${\bf
X}_1^{\gamma_N}+\sum_{i=1}^{N-1}(-1)^{\sigma_i}\Delta_i/\Delta_N
{\bf X}_1^{\gamma_i}$ is generic among the polynomials of support
$\Gamma$, monic in $\gamma_1$. This means that, for almost every
$t_{ij}, 2\leq i\leq N, 1\leq j\leq N$,  the set of
zeroes of $\VV$ in $\overline{k}^n$ after setting
$X_{ij}\mapsto t_{ij}$ is
irreducible (by Proposition \ref{jouap}).

As a consequence of this, we get that the set of zeroes of $\VV$ is
irreducible in $\overline{k({\bf X}_2, \ldots, {\bf X}_N)}$, and hence -as in
Proposition \ref{desc}- $\VV$ must be the power
of an irreducible polynomial. By using again Proposition \ref{desc} and our
hypothesis, we conclude that $\VV$ is irreducible in $\overline{k({\bf X}_2,
\ldots, {\bf X}_N)}[X_1]$.

In order to show irreducibility in $\overline{k}[{\bf X}_1, {\bf
X}_2, \ldots, {\bf X}_N]$, we argue as follows: if it does factorize in
this ring, then it must have an irreducible factor depending only on
${\bf X}_2, \ldots, {\bf X}_N$. It cannot be a
monomial by the second hypothesis. So, it is a proper factor of positive degree
in -we can assume w.l.o.g.- ${\bf X}_2$ and degree zero in ${\bf X}_1$.  We then
have
$$
\VV=p({\bf X}_2,\ldots,{\bf X}_N)q({\bf X}_1,{\bf X}_2,\ldots,{\bf X}_N),$$
with $\deg_{{\bf X}_2}(p)>0.$
By
making the change of
coordinates ${\bf X}_2\leftrightarrow {\bf X}_1$,
we get
\begin{equation}\label{bar}
-\VV=p({\bf X}_1,{\bf X}_3,\ldots,{\bf X}_N)q({\bf X}_2,{\bf X}_1,\ldots,{\bf
X}_N).
\end{equation}
If $\deg_{{\bf X}_2}\left(q({\bf X}_1,{\bf X}_2,\ldots,{\bf X}_N)\right)>0$,
then minus the right hand side of (\ref{bar}) is
a factorization of $\VV$ with two factors os positive
degree in ${\bf X}_1$,  a contradiction with the
irreducibility over $\overline{k({\bf X}_2, \ldots, {\bf X}_N)}[{\bf
X}_1]$. So, we must actually have
$$
\VV=p({\bf X}_2,\ldots,{\bf X}_N)q({\bf X}_1,{\bf X}_3,\ldots,{\bf X}_N).
$$
But now, if we set ${\bf X}_1={\bf X}_2$ in $\VV$, we get
$$0=p({\bf X}_1,\ldots,{\bf X}_N)q({\bf X}_2,{\bf X}_3,\ldots,{\bf X}_N),$$
a contradiction with the fact that neither $p$ nor $q$ are zero. Hence, the
irreducibility of $\VV$ follows.

\section{The tropical approach}\label{sec:trop}

As in the introduction, the expression
$\VV=\sum_{i=1}^N(-1)^{\sigma_i}\Delta_i{\bf X}_1^{\gamma_i}$
corresponds to the development of the generalized Vandermonde
determinant with respect to the first row of its defining matrix. We
dehomogenize again this polynomial as $${\bf X}_1^{\gamma_N}
+\sum_{i=1}^{N-1} A_i {\bf X}_1^{\gamma_i},$$ where the $A_i$'s are
algebraically independent over $k$ by Theorem~\ref{minors}.

Given any rank one valuation: $v:\overline{k(A_1,\ldots,
A_{N-1})}\rightarrow \mathbb{R}$, we define Trop$(\VV)$, the tropicalization of
$\VV$, as
the closure of $\{\VV=0\}\subset \overline{k(A_1,\ldots,
A_{N-1})}^n$ under this valuation.

Let $\Lambda\subset\mathbb{R}^n$ be the convex hull of $\Gamma$. The values
$v(A_i)$
define a regular subdivision $Subdiv(\Lambda)$ that is
combinatorially dual to  Trop$(\VV)$
(\cite[Proposition 3.11]{Mik05}). In particular, by the duality, the
vertices of $Subdiv(\Lambda)$ correspond to the connected components
of $\mathbb{R}^n\setminus \textrm{Trop}(\VV)$, the edges of
$Subdiv(\Lambda)$ correspond to the facets of $\textrm{Trop}(\VV)$
and the two-dimensional polytopes of $Subdiv(\Lambda)$ correspond to
the ridges of $\textrm{Trop}(\VV)$. There are more cells, but we will focus only
on
these.

Every facet $F$ of $\textrm{Trop}(\VV)$ has
associated a \emph{multiplicity} as follows: let $e$ be the corresponding dual
edge of $F$ in $Subdiv(\Lambda)$. The multiplicity of $F$ is defined
as $\#(e\cap \mathbb{Z}^n)-1$, the integer length of $e$. With this definition,
the \emph{balancing condition} on the ridges of Trop$(\VV)$ holds:
given any such ridge $R$, let $F_1,\ldots, F_r$ be the facets containing
$R$ in their boundary,  $m_i$ be the multiplicity of $F_i$ and
$v_i$ the primitive integer normal vector to the affine hyperplane generated
by $F_i$ chosen with a compatible orientation. Then:
\[\sum_{i=1}^r m_iv_i=0\]
We refer to \cite{Mik05} or \cite{Tevelev-Sturmfels-Elimination} for more
background on this subject. We
will use the balancing condition to show the irreducibility of
Trop$(\VV)$.

\subsection*{Proof of Theorem \ref{irred_tropical}}
\begin{proof}
If one of the hypotheses of \ref{irred_tropical} is not fulfilled, then, it is
easy to find a field $k$ where $\VV$ factors, and hence Trop$(\VV)$ will be
reducible. Suppose then that the three conditions hold and let $k$ be any
field.

Consider ${\bf X}_1^{\gamma_N} +\sum_{i=1}^{N-1} A_i {\bf X}_1^{\gamma_i}$,
where $A_i$ are rational functions in $\{{\bf X}_2,\ldots, {\bf X}_N\}$,
algebraically independent over $k$. Hence, any function
$v:\{A_1,\ldots,A_{N-1}\}\rightarrow \mathbb{R}$, can be extended to a valuation
\[{\bf v}:\overline{k({\bf X}_2,\ldots,{\bf X}_N)}\rightarrow \mathbb{R}.\]

For our proof, we need a function that induces a regular
triangulation of $\Gamma$. We may take, for instance, any
appropriate generic infinitesimal perturbation of the standard
paraboloid lifting function:
\[v(A_i)=\sum_{j=1}^n((\gamma_{ij}+\epsilon_{ij})^2-\gamma_{Nj}^2).\]
This function induces a Delaunay triangulation of the set of exponents
$\{\gamma_1, \ldots, \gamma_N\}$ (See \cite{Handbook}).

The tropicalization of $\VV$ under this valuation is combinatorially dual to
this triangulation. So in particular, the ridges of Trop$(\VV)$ are always the
intersection of three facets, because their dual cell is always a triangle.
Moreover, for any such intersection, the compatible primitive vectors involved
in the balancing condition are two by two linearly independent.

Suppose that Trop$(\VV)=H_1 \cup H_2$. Let $F_1$ be a facet of
Trop($\VV$) and suppose that $F_1\subseteq H_1$. We want to prove
that Trop$(\VV)\subseteq H_1$ as sets of points. Let $R$ be any
ridge incident to $F_1$ and $F_2$, $F_3$ the other two facets
incident to $R$. Let $m_i$ be the weight of $F_i$ as a facet of
$H_1$, so $m_i=0$ if and only if $F_i$ is not a facet of $H_1$. Let
$v_i$ be the compatible primitive vector associated to $F_i$ and
$R$. Since $F_1\in H_1$, its weight must be a positive integer,
$m_1>0$.

From the balancing condition, we have that $m_1v_1+ m_2v_2
+m_3v_3=0$. Since $m_1>0$ and $v_1, v_2, v_3$ are pairwise linearly
independent vectors, it must happen that $m_2>0, m_3>0$. That is,
$F_2, F_3$ have positive weight, so they belong to $H_1$ as sets of
points. To sum up, for any facet $F$ of Trop($\VV$) belonging to
$H_1$ it happens that the facets that are ridge-connected to $F$
also belong to $H_1$. Now, since $\Gamma$ is not contained in a
line, it is known that Trop($\VV$) is ridge-connected, that is,
every two facets can be connected by a path of facets such that any
two of them that are consecutive have a common ridge.

We can then conclude by induction by showing that Trop$(\VV)=H_1$ as subsets of
$\mathbb{R}^n$. In particular, Trop$(\VV)$ cannot factorize as the union of two
different tropical hypersurfaces, set-theoretically.

However, it could still happen that Trop$(\VV)=H_1 \cup H_2$  with $H_1 =H_2$ as
sets, but being different only by the multiplicities of the facets. In that
case, let $m_i^1$,  $m_i^2$ be the multiplicity of $F_i$ as a facet of $H_1$ and
$H_2$ respectively. Then, it is easy to check that $m_i^1/m_i^2=p/q$ is a
rational constant that does not depend on the
facet. Thus, there are integer positive numbers $m_i^0$ such that
$m_i^1=k_1 m_i^0$, $m_i^2=k_2 m_i^0$, where $k_1,k_2\in
\mathbb{Z}_{>0}$ are constants not depending on the facet $i$. Hence,
the multiplicity of $F_i$ as a facet of $H$ is $(k_1+k_2)m_i^0$, and this imply
that every facet has a multiplicity which is a multiple of
$k_1+k_2\geq 2$. By duality, every edge of $Subdiv(\Lambda)$ will have an
integer length multiple of $(k_1+k_2)$. It follows that $d_\Gamma$
is a multiple of $k_1+k_2$, which contradicts the hypotheses.
\end{proof}


\end{document}